\documentclass[12pt]{amsart}
\usepackage{amssymb,amsmath,amsthm}
\numberwithin{equation}{section}
\usepackage{enumerate}
\usepackage{hyperref}

\newtheorem{thm}{Theorem}[section]
\newtheorem{lemma}[thm]{Lemma}

\newtheorem{cor}[thm]{Corollary}
\newtheorem{prop}[thm]{Proposition}
\theoremstyle{definition}

\newtheorem{defn}[thm]{Definition}

\usepackage{color}
\begin{document}
\title{Factorization in weak products of complete Pick spaces}

\author{Michael T. Jury}
\address{University of Florida}
\email{mjury@ufl.edu}

\author{Robert T.W. Martin}
\address{University of Cape Town}
\email{rtwmartin@gmail.com}

\thanks{Second author acknowledges support of NRF CPRR Grant 105837.}
\date{\today}

\begin{abstract}
Let $\mathcal H$ be a reproducing kernel Hilbert space with a normalized complete Nevanilinna-Pick (CNP) kernel. We prove that if $(f_n)$ is a sequence of functions in $\mathcal H$ with $\sum\|f_n\|^2<\infty$, then there exists a contractive column multiplier $(b_n)$ of $\mathcal H$ and a cyclic vector $f\in \mathcal H$ so than $b_nf=f_n$ for all $n$. 

The space of weak products $\mathcal H\odot\mathcal H$ consists of functions of the form $h=\sum_{i=1}^\infty f_ig_i$ with $f_i, g_i\in\mathcal H$ and $\sum_{i=1}^\infty \|f_i\|\|g_i\|<\infty$. Using the above result, in combination with a recent result of Aleman, Hartz, McCarthy, and Richter, we show that for a large class of CNP spaces (including the Drury-Arveson spaces $H^2_d$ and the Dirichlet space in the unit disk) every $h\in\mathcal H\odot\mathcal H$ can be factored as a single product $h=fg$ with $f,g\in\mathcal H$. 
\end{abstract}

\maketitle
\section{Introduction}

A (normalized) {\em complete Nevanlinna-Pick (CNP)} kernel on a set $\Omega$ is a function $k:\Omega\times\Omega\to \mathbb C$ of the form
\begin{equation}
  k(x,y)=\frac{1}{1-u(x)u(y)^*}
\end{equation}
where $u$ is any function from $\Omega$ into the open unit ball $\mathbb B^d$ (here $d=\infty$ is allowed; in this case $u$ would be a map from $\Omega$ into $\ell^2(\mathbb N)$ satisfying $\|u(x)\|^2=\sum_{n=1}^\infty |u_n(x)|^2<1$ for all $x\in\Omega$). From a basic result of Agler and McCarthy \cite{agler-mccarthy-2000}, if we set $E=u(\Omega)\subset \mathbb B^d$ and define
\begin{equation}
  \mathcal H_E :=cl(span\{(1-zw^*)^{-1}:w\in E\})\subset H^2_d, \label{HE}
\end{equation}
then $\mathcal H(k)$ is isometrically isomorphic to $\mathcal H_E$ via the map $k(\cdot, y)\to (1-zu(y)^*)^{-1}$. Moreover the restriction map $f\to f|_E$ is a co-isometry of $H^2_d$ onto $\mathcal H_E$, and the restriction $b\to b|_E$ is a (complete) contraction of $Mult(\mathcal H^2_d)$ onto $Mult(\mathcal H_E)$.  Finally, functions in $\mathcal H_E, Mult(\mathcal H_E)$ respectively have norm-preserving extensions to functions in $H^2_d, Mult(H^2_d)$ respectively. 

\begin{thm}\label{thm:cheap-inner-outer}
  Let $\mathcal H$ be a space with a normalized CNP kernel. If $(f_n)$ is a sequence in $\mathcal H$ with $\sum_{n=1}^\infty \|f\|_{\mathcal H}^2<\infty$, then there exist a sequence $(\varphi_n)\subset Mult(\mathcal H)$ and a cyclic vector $F\in\mathcal H$, so that 
  \begin{itemize}
  \item[(i)] $\|F\|_{\mathcal H}^2\leq \sum_{n=1}^\infty \|f_n\|_{\mathcal H}^2$,
  \item[(ii)] the column
    \begin{equation*}
      \begin{pmatrix} \varphi_1 \\ \varphi_2 \\ \vdots\end{pmatrix}
    \end{equation*}
is contractive, and
\item[(iii)] $f_n=\varphi_nF$ for all $n$. 
  \end{itemize}
\end{thm}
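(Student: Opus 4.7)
The plan is to reformulate the desired factorization as a single Leech/Toeplitz--corona condition on a scalar cyclic vector $F$, reduce to the Drury--Arveson model $H^2_d$ via Agler--McCarthy, and then construct $F$ there.

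Collect the $f_n$ into a single column $\mathbf f\in\mathcal H\otimes\ell^2$ with $\|\mathbf f\|^2=\sum\|f_n\|^2$. A column $\Phi=(\varphi_n)\subset\mathrm{Mult}(\mathcal H)$ is a contractive multiplier iff the kernel $(1-\sum_n\varphi_n(z)\overline{\varphi_n(w)})k(z,w)$ is positive semidefinite, and in that case the equality $\Phi F=\mathbf f$ translates, after multiplying through by $F(z)\overline{F(w)}$, to positivity of the Leech-type kernel
\[
K_{F,\mathbf f}(z,w):=\bigl(F(z)\overline{F(w)}-\sum_n f_n(z)\overline{f_n(w)}\bigr)\,k(z,w).
\]
By the column Leech theorem for CNP spaces this equivalence is two-sided, so the whole problem reduces to producing a single $F\in\mathcal H$ which is cyclic, has $\|F\|^2\le\|\mathbf f\|^2$, and makes $K_{F,\mathbf f}$ positive semidefinite. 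By the Agler--McCarthy realization $\mathcal H\cong\mathcal H_E\subseteq H^2_d$ recalled in the excerpt, it suffices to do this in $H^2_d$: lift each $f_n$ to $\tilde f_n\in H^2_d$ with $\|\tilde f_n\|_{H^2_d}=\|f_n\|_{\mathcal H}$ using the norm-preserving extension, construct $\tilde F\in H^2_d$ with the three analogous properties, and restrict to $E$. The norm bound descends because restriction is a co-isometry $H^2_d\to\mathcal H_E$; the column $(\tilde\varphi_n|_E)$ remains contractive because restriction of multipliers is a complete contraction; and cyclicity descends because both restriction maps are surjective.

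The main obstacle is the construction of $\tilde F$ itself. In $H^2(\mathbb D)$ it is the classical outer function with $|\tilde F|^2=\sum_n|\tilde f_n|^2$ a.e.\ on $\mathbb T$, and Beurling--Szeg\H{o} boundary theory then delivers cyclicity, the norm identity $\|\tilde F\|^2=\sum\|\tilde f_n\|^2$, and positivity of $K_{\tilde F,\mathbf f}$ at a single stroke. In $H^2_d$ with $d\ge 2$ this boundary theory is unavailable, so I would build $\tilde F$ intrinsically --- for instance as an extremal/wandering vector generating the $\mathrm{Mult}(H^2_d)$-invariant subspace $\overline{\mathrm{Mult}(H^2_d)\cdot\mathbf f}\subset H^2_d\otimes\ell^2$ (via the McCullough--Trent theorem) --- and then arrange cyclicity in all of $H^2_d$ by a small perturbation, passing to a weak limit at the end to recover the sharp norm bound. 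The real tension in the construction is between cyclicity (which pushes $\tilde F$ large) and the norm bound together with kernel positivity (which pull it small); resolving this tension is where the technical heart of the argument lies.
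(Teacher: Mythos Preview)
Your reduction to a single Leech condition and the descent from $H^2_d$ to a general CNP space are both fine and match what the paper does. The gap is precisely where you locate the ``technical heart'': the construction of $\tilde F$ in $H^2_d$. Your suggestion to take a wandering vector for the $\mathrm{Mult}(H^2_d)$-invariant subspace generated by $\mathbf f$ via McCullough--Trent does not work as stated, because in $H^2_d$ with $d\ge 2$ a cyclic multiplier-invariant subspace can have wandering subspace of dimension strictly greater than one (indeed infinite). McCullough--Trent then produces a partially isometric column multiplier from $H^2_d\otimes\mathcal K$ with $\dim\mathcal K>1$, and there is no single scalar $\tilde F$ hitting $\mathbf f$. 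Your fallback of perturbing for cyclicity and taking weak limits is also fragile: weak limits need not preserve either cyclicity or the pointwise kernel inequality $K_{\tilde F,\mathbf f}\ge 0$.

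The paper sidesteps this entirely by lifting one level higher, to the full Fock space $\mathfrak F^2_d$. Each $f_n$ has a norm-preserving free lift $F_n\in\mathfrak F^2_d$, and one forms the closed $(R\otimes I)$-invariant subspace $\mathcal M\subset\mathfrak F^2_d\otimes\ell^2$ generated by the column $\widetilde F=(F_n)$. The crucial point is that in the free setting the Davidson--Pitts Beurling theorem gives a genuine orthogonal wandering decomposition, and because $\widetilde F$ is cyclic for $R\otimes I|_{\mathcal M}$ the wandering subspace is forced to be one-dimensional. Hence there is an \emph{isometric} left column multiplier $M_\varphi:\mathfrak F^2_d\to\mathcal M$, and a unique $F\in\mathfrak F^2_d$ with $\varphi_nF=F_n$. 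This $F$ is automatically $R$-cyclic (it corresponds to the cyclic vector $\widetilde F$ under the unitary $M_\varphi$) and satisfies $\|F\|^2=\sum\|F_n\|^2$ on the nose. Passing to commuting arguments and then restricting to $E$ gives all three conclusions at once; there is no tension between cyclicity and the norm bound to resolve.
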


\subsection{Weak products} For a Hilbert function space $\mathcal H$ we define the {\em weak product} $\mathcal H \odot \mathcal H$ to be 
\begin{equation}
  \mathcal H\odot \mathcal H:=\left\{ \sum_{i=1}^\infty f_i g_i : f_i, g_i\in\mathcal H, \sum_{i=1}^\infty \|f_i\| \|g_i\|<\infty \right\}.
\end{equation}
For $h\in\mathcal H\odot\mathcal H$, the quantity
\begin{equation}
  \|h\|:= \inf\left\{ \sum_{i=1}^\infty \|f_i\|_{\mathcal H}\|g_i\|_{\mathcal H}: h=\sum_{i=1}^\infty f_ig_i\right\}
\end{equation}
defines a norm on $\mathcal H\odot \mathcal H$, making it into a Banach space. Weak products were introduced by Coifman, Rochberg, and Weiss \cite{coifman-rochberg-weiss}, who proved that for the classical spaces $H^2(\partial\mathbb B^d)$ on the unit ball, we have $H^2(\partial\mathbb B^d)\odot H^2(\partial\mathbb B^d) =H^1(\partial\mathbb B^d)$ (with equivalent norms). Generically, weak products arise as the predual of the space of bounded Hankel-type bilinear forms on $\mathcal H$; we refer to \cite{AHMR-2018} for further discussion of this topic.

To state our factorization theorem, we need one more ingredient. 

\begin{defn}
We say that $Mult(\mathcal H)$ has the {\em column-row property} if: Whenever $\Phi=(\varphi_n)$ is a sequence in $Mult(\mathcal H)$ for which the column multiplication operator 
\begin{equation}
  M_\Phi^C:f\to \begin{pmatrix} \varphi_1 f\\ \varphi_2 f\\ \vdots\end{pmatrix}
\end{equation}
is bounded from $\mathcal H$ to $\mathcal H\otimes \ell^2$, the corresponding row operator
\begin{equation}
  M_\Phi^R :\begin{pmatrix} f_1 \\ f_2\\ \vdots \end{pmatrix} \to \sum_{n=1}^\infty \varphi_n f_n
\end{equation}
is bounded from $\mathcal H\otimes \ell^2$ to $\mathcal H$. We say that the column-row property holds {\em continuously with constant $c$} if for every sequence $\Phi$ we have
\begin{equation}
  \|M_\Phi^R\|\leq c\|M_\Phi^C\|.
\end{equation}
\end{defn}
A particular consequence is that if $Mult(\mathcal H)$ has the column-row property and $\Phi = (\varphi_n)$ and $\Psi=(\psi_n)$ are symbols of column multipliers, then the function
  \begin{equation}
    \chi =\sum_{n=1}^\infty \varphi_n\psi_n
  \end{equation}
belongs to $Mult(\mathcal H)$, since $M_\chi = M_\Phi^R M_\Psi^C$ is then bounded.  

For the Hardy space $H^2$ on the unit disk, the multiplier norm is the supremum norm, and of course the column-row property holds trivially, with constant $1$. In this case it is of course well known that $H^2\odot H^2=H^1$, and every $h\in H^1$ factors as $h=fg$ with $h,g\in H^2$ and $\|f\|^2_2=\|g\|^2_2=\|h\|_1$.  Trent \cite{trent-2004} proved that the column-row property holds continuously for the Dirichlet space $\mathcal D$ on the unit disk with constant $c\leq \sqrt{18}$. Very recently Aleman, Hartz, McCarthy, and Richter \cite{AHMR-2018} proved that the column-row property holds for the Drury-Arveson spaces $H^2_d$ (for {\em finite} $d$), with a constant $c_d$ depending on the dimension $d$, and in fact for a larger class of weighted Besov-type spaces in the unit ball. 

\begin{thm}\label{thm:weak-products}
 Suppose $\mathcal H$ is a Hilbert function space with normalized CNP kernel, and that $Mult(\mathcal H)$ has the column-row property. Then every $h\in\mathcal H\odot\mathcal H$ can be factored as $h=fg$ for some $f, g\in \mathcal H$.  Moreover, if the column-row property holds continuously with constant $c$, then the $f,g$ can be chosen so that $\|f\|_{\mathcal H}\|g\|_{\mathcal H}\leq c\|h\|_{\mathcal H \odot\mathcal H}$.  

In particular the latter conclusion holds for $\mathcal H=\mathcal D$ (the Dirichlet space on the unit disk) and $\mathcal H=H^2_d$ (the Drury-Arveson space), for finite $d$. 
\end{thm}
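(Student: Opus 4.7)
The plan is to reduce the theorem to two applications of Theorem \ref{thm:cheap-inner-outer}, one for each ``leg'' of a weak-product decomposition of $h$, and then to invoke the column-row property to contract the two resulting column multipliers into a single scalar multiplier.

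Given $h\in\mathcal H\odot\mathcal H$ and $\varepsilon>0$, I would first pick a decomposition $h=\sum f_ig_i$ with $\sum\|f_i\|\,\|g_i\|\leq\|h\|_{\mathcal H\odot\mathcal H}+\varepsilon$ and then rescale each pair as $(f_i,g_i)\mapsto(t_if_i,t_i^{-1}g_i)$ with $t_i=\sqrt{\|g_i\|/\|f_i\|}$, so that after rescaling $\|f_i\|=\|g_i\|$ for every $i$ and
\[
\sum\|f_i\|^2=\sum\|g_i\|^2=\sum\|f_i\|\,\|g_i\|\leq\|h\|_{\mathcal H\odot\mathcal H}+\varepsilon.
\]
Applying Theorem \ref{thm:cheap-inner-outer} separately to $(f_i)$ and $(g_i)$ then produces $F,G\in\mathcal H$ and contractive column multipliers $\Phi=(\varphi_i)$, $\Psi=(\psi_i)$ with $f_i=\varphi_iF$, $g_i=\psi_iG$, $\|F\|^2\leq\sum\|f_i\|^2$, and $\|G\|^2\leq\sum\|g_i\|^2$.

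Next I would set $\chi:=\sum_i\varphi_i\psi_i$. Column-contractivity forces $\sum_i|\varphi_i(x)|^2\leq 1$ and $\sum_i|\psi_i(x)|^2\leq 1$ for every $x$, so by Cauchy-Schwarz the series for $\chi$ converges pointwise with $|\chi(x)|\leq 1$; by the column-row property, the factorization $M_\chi=M_\Phi^R M_\Psi^C$ shows that $\chi\in Mult(\mathcal H)$ with $\|M_\chi\|\leq c$ when the column-row property holds continuously with constant $c$. Setting $f:=\chi F\in\mathcal H$ and $g:=G\in\mathcal H$, pointwise evaluation gives
\[
h(x)=\sum_i f_i(x)g_i(x)=F(x)G(x)\sum_i\varphi_i(x)\psi_i(x)=f(x)g(x),
\]
so $h=fg$ with $\|f\|\,\|g\|\leq c\,\|F\|\,\|G\|\leq c(\|h\|_{\mathcal H\odot\mathcal H}+\varepsilon)$, already yielding the unconditional factorization statement.

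To reach the sharp bound $\|f\|\,\|g\|\leq c\,\|h\|_{\mathcal H\odot\mathcal H}$, I would take $\varepsilon_n\to 0$ and extract a weakly convergent subsequence of the resulting factorizations $(f^{(n)},g^{(n)})$, normalized so that $\|f^{(n)}\|=\|g^{(n)}\|$; since point evaluation is weakly continuous on $\mathcal H$ the identity $f^{(n)}(x)g^{(n)}(x)=h(x)$ passes to the limit, and weak lower semicontinuity of the norm delivers the desired inequality. The final ``in particular'' claim is then immediate from the Trent \cite{trent-2004} and Aleman-Hartz-McCarthy-Richter \cite{AHMR-2018} estimates on the column-row constant for $\mathcal D$ and $H^2_d$. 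The heart of the argument is already carried by Theorem \ref{thm:cheap-inner-outer}; what is essentially new here is the recognition that the column-row property is exactly what collapses two contractive column symbols $\Phi,\Psi$ into a single scalar multiplier $\chi$, so the main ``obstacle'' is conceptual rather than technical, with only the weak-compactness step to achieve the sharp constant requiring any real care.
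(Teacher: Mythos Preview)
Your proof is correct and follows essentially the same route as the paper's own argument: rescale to equalize $\|f_i\|=\|g_i\|$, apply Theorem~\ref{thm:cheap-inner-outer} to each sequence, collapse the resulting columns into a single multiplier via the column-row property, and then pass to the sharp constant by a weak-limit argument as $\varepsilon\to 0$. Your additional remarks on the pointwise convergence of $\chi$ and the explicit normalization $\|f^{(n)}\|=\|g^{(n)}\|$ before extracting weak limits are minor elaborations of steps the paper leaves implicit.
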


We give the proof here, since it is an immediate application of Theorem~\ref{thm:cheap-inner-outer}, which we will then prove in the next section. 

\begin{proof}[Proof of Theorem~\ref{thm:weak-products}]
  Suppose $h=\sum_{i=1}^\infty f_i g_i $ with $\sum_{i=1}^\infty \|f_i\|\|g_i\|<\infty$. For each $i$, multiplying $f_i$, $g_i$ by appropriate constants we may assume that $\|f_i\|=\|g_i\|$ for all $i$, and hence $\sum_{i=1}^\infty \|f_i\|^2 =\sum_{i=1}^\infty \|g_i\|^2<\infty$.  Applying Theorem~\ref{thm:cheap-inner-outer} there exist column contractions $(\varphi_i)$ and $(\psi_i)$ and cyclic vectors $F,G$ such that for all $i=1,2,\dots$ we have
  \begin{equation}
    f_i=\varphi_i F, \quad g_i=\psi_i F. 
  \end{equation}
By the column-row property,
the row $(\psi_1, \psi_2, \dots )$ is also bounded multiplier, and therefore 
\begin{equation}
  m=\sum_{i=1}^\infty \varphi_i \psi_i \in Mult(\mathcal H).
\end{equation}
Thus
\begin{equation}\label{eqn:factorization}
  h=\sum_{i=1}^\infty f_ig_i = mFG =fg
\end{equation}
if we put $f=mF, g=G$.  

Suppose now the column-row property holds with constant $c$. Then the $m$ in (\ref{eqn:factorization}) will have $\|m\|_{Mult(\mathcal H)}\leq c$.  Moreover, for any $\epsilon >0$ the sequences $(f_i), (g_i)$ in the above argument can be chosen so that 
\begin{equation}
  \sum_{i=1}^\infty \|f_i\|^2_{\mathcal H} = \sum_{i=1}^\infty \|g_i\|^2_{\mathcal H} <(1+\epsilon) \|h\|_{\mathcal H\odot\mathcal H}, 
\end{equation}
hence by Theorem~\ref{thm:cheap-inner-outer} the $F,G$ can be chosen with $\|F\|^2=\|G\|^2 <(1+\epsilon)\|h\|$, so that in (\ref{eqn:factorization}) we will have $\|f\|_{\mathcal H}\|g\|_{\mathcal H} <c(1+\epsilon)\|h\|$. Taking a sequence of $\epsilon$'s 
tending to $0$, we can take weak limits of the corresponding $f$ and $g$ in $\mathcal H$, to obtain a factorization obeying $\|f\|_{\mathcal H}\|g\|_{\mathcal H} \leq c\|h\|_{\mathcal H\odot\mathcal H}$. 

\end{proof}

\section{Proof of Theorem~\ref{thm:cheap-inner-outer}}

\subsection{The free semigoup algebra and free liftings}
We write $\mathbb F_d^+$ for the free semigroup on $d$ letters $\{1, 2, \dots \}$ (again, $d$ countably infinite is allowed); that is, the set of all words $\alpha= i_1i_2\cdots i_k$, over all (finite) lengths $k$, where each $i_j\in\{1, 2, \dots\}$. We write $|\alpha|=k$ for the length of $\alpha$. We also include the {\em empty word} in $\mathbb F_d^+$, and denote it $\varnothing$, and put $|\varnothing|=0$.  There is a {\em transpose map} which reverses the letters in a word: if $\alpha=i_1i_2\cdots i_k$ then we write
\begin{equation}
  \alpha^\dag = i_ki_{k-1}\cdots i_2i_1.
\end{equation}

The {\em Fock space} $\mathcal F^2_d$ is a Hilbert space with orthonormal basis $\{\xi_\alpha\}_{\alpha\in\mathbb F^+_d}$ labeled by the free semigroup on $d$ letters. For each letter $i$ there is an isometric operator $L_i$ acting in $\mathcal F^2_d$ defined on the orthonormal basis $\{\xi_\alpha\}$ by
  \begin{equation}
    L_i\xi_\alpha = \xi_{i\alpha}
  \end{equation}
(called the {\em left creation operators}), here $i\alpha$ just means the word obtained from $\alpha$ by appending the letter $i$ on the right. One analogously defines the right creation operators $R_i$. The operators $L_i$ (and the $R_i$) have orthogonal ranges, and hence obey the identity
  \begin{equation}
    L_i^*L_j=\delta_{ij}I.
  \end{equation}
The {\em free semigroup algebra} $\mathcal L_d$ is the (unital) WOT-closed algebra generated by the $L_i$. For a word $\alpha =i_1i_2\dots i_k$ we write $L_\alpha=L_{i_1}L_{i_2}\cdots L_{i_k}$. One can show that the corresponding algebra generated by the right creation operators, $\mathcal R _d$, and $\mathcal L _d$ are each other's commutants (and in fact that $\mathcal R _d$ is simply the image of $\mathcal L _d$ under conjugation by the `tranpose unitary': precisely, if $W:\mathfrak F^2_d\to \mathfrak F^2_d$ is the unitary map which acts on basis vectors as $W\xi_\alpha =\xi_{\alpha^\dag}$, then we have $WL_\alpha W^* = R_{\alpha^\dag}$).  Each element $F$ of the free semigroup algebra admits a Fourier-like expansion
\begin{equation}
  F\sim \sum_{\alpha\in\mathbb F^+_d} c_{\alpha} L_\alpha,
\end{equation}
and the Ces\`aro means of this series converge in the strong operator topology (SOT) to $F$ \cite{DPac}. 

finally We recall the connection between the free function spaces $\mathfrak F^2_d$, $\mathfrak F^\infty_d$, and the Drury-Arveson space $H^2_d$ and its multiplier algebra. 

First, given a free holomorphic function $H\in\mathfrak F^2_d$, we may of course evaluate it on a tuple of $1\times 1$ matrices $z=(z_1, z_2, \dots)$ satisfying $\sum_j |z_j|^2<1$, (i.e. a point of the open unit ball $\mathbb B^d$). The resulting holomorphic function $h(z)=H(z)$ belongs to the Drury-Arveson space on $\mathbb B^d$, and in fact this map is a co-isometry. In particular every $h\in H^2_d$ has a {\em free lift} to a free function $H\in \mathfrak F^2_d$, and there is a unique such lift preserving the norm: $\|H\|=\|h\|$. Namely, if $\mathbb{N} ^d$ denotes the additive monoid of $d-$tuples of non-negative integers, and $\mathbf{n} := (n_1 , ... , n_d ) \in \mathbb{N} ^+ _d$, set $z^\mathbf{n} := z_1 ^{n_1} \cdots z_d ^{n_d}$. Since the free semigroup $\mathbb{F} _d ^+$ is the universal monoid on $d$ generators, there is a unital semi-group epimorphism, $\lambda : \mathbb{F} _d ^+ \rightarrow \mathbb{N} _d ^+$, the \emph{letter counting map}, defined by $\lambda (\alpha) = (n _1 , ..., n_d )$, where $n_k$ is the number of times the letter $k$ appears in the word $\alpha$. Every $h \in H^2 _d$ has a Taylor series expansion (about $0$) indexed by $\mathbb{N} ^+ _d$, and the map:
$$ h(z) = \sum _{\mathbf{n} \in \mathbb{N} ^+ _d  } h_{\mathbf{n}} z^\mathbf{n} \  \mapsto \  H(Z) := \sum h_{\mathbf{n}} Z ^{\mathbf{n}}; \quad Z \in \mathcal{B} _d $$ 
defines an isometric embedding of $H^2 _d$ into $\mathfrak{F} ^2 _d$ (\emph{i.e.} $H^2 _d$ is identified with symmetric Fock space), where 
$$ Z ^{\mathbf{n}} := \sum _{\alpha | \ \lambda (\alpha ) = \mathbf{n}} Z^\alpha, $$ see \cite[Section 4]{Sha2013}. 
Likewise, the map $F\to F(z)$ is a completely contractive homomorphism from $\mathfrak F^\infty_d$ onto the multiplier algebra $Mult(H^2_d)$, (see \cite[Theorem 4.4.1, Subsection 4.9]{Sha2013} or \cite[Section 2]{DP-NP}) and again (by commutant lifting) every $f\in Mult H^2_d$ has a norm-preserving free lift to some $F\in \mathfrak F^\infty_d$ \cite{Ball2001-lift,DL2010commutant}. (Unlike the Hilbert space case, however, norm-preserving free lifts from $Mult(H^2_d)$ to $\mathfrak F^\infty_d$ may not be unique \cite[Corollary 7.1]{JMfree}.)

\begin{proof}[Proof of Theorem~\ref{thm:cheap-inner-outer}]
We first prove the theorem in the case of the Drury-Arveson space $H^2_d$, then deduce the case of general CNP kernels. 

Fix a  sequence $(f_n)\subset H^2_d$ with $\sum_{n=1}^\infty \|f_n\|^2 <\infty.$ By the above remarks we may lift each of these functions $f_n$  to an element $F_n$ of the Fock space $\mathfrak F^2_d$, with the same norm. Now we let $\mathcal M$ denote the closed $\mathcal R_d\otimes I$-invariant subspace of $\mathfrak F^2_d\otimes \ell^2$ generated by the vector
\begin{equation}
  \widetilde{F} :=\begin{pmatrix} F_1 \\ F_2 \\ \vdots \end{pmatrix}.
\end{equation}
We first examine the {\em wandering
subspace} for the restriction of $R\otimes I$ to $\mathcal M$, 
\begin{equation}
  \mathcal W = \mathcal M\ominus (R\otimes I)\mathcal M.
\end{equation}
(Here $(R\otimes I)\mathcal M$ is a shorthand for the (closed) span of the ranges of the operators $R_j\otimes I$ restricted to $\mathcal M$. )
By the Davidson--Pitts version of the Beurling theorem for $\mathcal R_d$ (stated in \cite[Theorem 2.1]{davidson-pitts-1999} for $\mathcal L_d$, but it is evident that the analogous statements hold for $\mathcal R_d$), $\mathcal M$ is equal to the closure of the span of the orthogonal family of spaces
\begin{equation}
  \bigvee_{\alpha\in\mathbb F^+_d} (R\otimes I)^\alpha \mathcal W.
\end{equation}

We claim this space $\mathcal W$ is one-dimensional. This is immediate from the fact that by construction, $\mathcal M$ is cyclic for $R\otimes I|_{\mathcal M}$, more precisely, we claim that if $P_{\mathcal W}:\mathcal M\to \mathcal W$ is the orthogonal projection, then $P_{\mathcal W} \widetilde{F}$ spans $\mathcal W$. Indeed, if $G\in \mathcal W$ and $G\bot P_{\mathcal W} \widetilde{F}$, then since $(I-P_{\mathcal W})\widetilde{F}$ is in $W^\bot$, of course $G\bot (I-P_{\mathcal W})\widetilde{F}$ also, so $G\bot \widetilde{F}$. But also, for every word $\alpha$ with length at least 1, we have $(R^\alpha\otimes I)\widetilde{F}\in W^\bot$, so $G$ is orthogonal to all of these as well. Thus $G$ is orthogonal to $\widetilde{F}$ and all of its shifts, hence $G=0$ because $\widetilde{F}$ is cyclic. 

Applying the Beurling theorem again, since $dim \mathcal W =1$ there is an isometric (left) column multiplier
\begin{equation}\label{eqn:Mphi}
  M_\varphi :H\to \begin{pmatrix} \varphi_1 H \\ \varphi_2 H \\ \vdots \end{pmatrix}
\end{equation}
taking $\mathfrak F^2_d$ onto $\mathcal M$, intertwining the actions of $R$ and $R\otimes I|_{\mathcal M}$.  In particular there exists an $F\in\mathfrak F^2_d$ so that $M_\varphi F= \widetilde F$, that is, $\varphi_n F=F_n$ for each $n$. Since $M_\varphi$ is an isometry, $\|F\|^2 = \sum_{n=1}^\infty \|F_n\|^2$.  We also observe that $F$ is an $R$-cyclic vector in $\mathfrak F^2_d$, since it is corresponds to the $R\otimes I|_{\mathcal M}$-cyclic vector $\widetilde F$ under the unitary map (\ref{eqn:Mphi}). 

If we now pass to commuting arguments $\varphi \to \varphi(z)$, $F\to F(z)$, by our preliminary remarks the  holomorphic functions $\varphi(z)$ form a column contraction on $H^2_d$, and the function $F(z)$ belongs to $H^2_d$, with $\|F\|^2_{H^2_d}\leq \|F\|^2_{\mathfrak F^2_d} = \sum_{n=1}^\infty \|F_n\|^2 = \sum_{n=1}^\infty \|f_n\|^2_{H^2_d}$, and $\varphi_nF=f_n$ for all $n$.  Finally, since $F$ was $R$-cyclic for $\mathfrak F^2_d$, its commutative image $F(z)$ is cyclic for $H^2_d$.

Now let let $k$ be a normalized CNP kernel, by our earlier remarks we identify $\mathcal H(k)$ with $\mathcal H_E\subset H^2_d$.   Given the sequence $(f_n)\subset \mathcal H_E$, (recall Equation (\ref{HE}) for the definition of $\mathcal{H} _E$), we extend these functions isometrically to $H^2_d$, invoke the result just proved, and restrict the multipliers $\varphi_n$ and the function $F$ we obtain back to $E$.  These satisfy the conclusions (i)-(iii) of the theorem; the only thing that remains to be checked is that if $F$ is cyclic for $Mult(H^2_d)$, then its restriction to $E$ is cyclic for in $Mult(\mathcal H_E)$. We prove the contrapositive. If $g\in \mathcal H_E$ and $\langle g, M_\varphi F|_E\rangle_{\mathcal H_E}=0$ for all $\varphi\in Mult(\mathcal H_E)$, then extending $g$ to $H^2_d$ we have (since the inclusion $\mathcal H_E\subset H^2_d$ is isometric) $\langle g, M_\varphi F\rangle_{H^2_d}=0$ for all $\varphi\in Mult(H^2_d)$, whence $F$ is not cyclic. 

\end{proof}

\bibliographystyle{plain} 
\bibliography{WP} 

\end{document}